\newtheorem{Theorem}{Theorem}
\newtheorem{Proposition}{Proposition}
\newtheorem{Definition}{Definition}
\newtheorem{Corollary}{Corollary}
\theoremstyle{remark}
\theoremstyle{remark}
\theoremstyle{plain}
\theoremstyle{remark}
\newcommand{\R}{{\mathbb{R}}}
\newcommand{\Z}{{\mathbb{Z}}}
\newcommand {\N}{{\mathbb{N}}}
\begin{document}

\title{Edge Boundaries for a Family of Graphs on $\mathbb{Z}^n$}

\author{Ellen Veomett \\
\small Saint Mary's College of California \\
}

\maketitle

\begin{abstract}

We consider the family of graphs whose vertex set is $\Z^n$ where two vertices are connected by an edge when their $\ell_\infty$-distance is 1.  Towards an edge isoperimetric inequality for this graph, we calculate the edge boundary of any finite set $S \subset \Z^n$.  This boundary calculation leads to a desire to show that a set with optimal edge boundary has no ``gaps'' in any direction $\epsilon \in \{-1,0,1\}^n, \epsilon \not=0$.  We show that one can find a set with optimal edge boundary that does not have gaps in any direction $e_i$ (or $-e_i$) where $e_i$ is the standard basis vector.
\end{abstract}

\section{Introduction}

For a metric space $(X,d)$ with a notion of volume and boundary, an isoperimetric inequality gives a lower bound on the boundary of a set of fixed volume.  Ideally, for any fixed volume, it produces a set of that volume with minimal boundary.    The most well-known  isoperimetric inequality states that, in Euclidean space, the unique set of fixed volume with minimal boundary is the Euclidean ball.   

A graph $G = (V,E)$ can be defined as a metric space in the usual way: for $u, v \in V$, 
\begin{equation*}
d(u,v) = \text{the length of the shortest path from } u \text{ to } v.
\end{equation*}
For a graph, an isoperimetric inequality gives a lower bound on the boundary of a set $A \subset V$ of a given size.  The term ``boundary'' here can be interpreted in two standard ways: the vertex boundary or the edge boundary.  The vertex boundary is typically defined as follows: 
\begin{equation*}
\partial A = \{v \in V: d(v,A) \leq 1\}
\end{equation*}
where
\begin{equation*}
d(x,A) = \inf_{a \in A} d(x,a) = \inf_{a \in A}\{\text{the length of the shortest path from } x \text{ to } a\}
\end{equation*}
In words: the vertex boundary of $A$ is the set $A$ itself, along with all of the neighbors of $A$.  The vertex boundary of various graphs has been studied in \cite{MR0200192}, \cite{DiscTor}, \cite{MR1082843},   \cite{MR1612869}, \cite{MR2946103}, and others.  

In this paper, we use another definition for boundary: the edge boundary.  The edge boundary is defined as follows:
\begin{equation*}
\partial_e(A) = \{(x,y) \in E: |A \cap \{x,y\}| = 1\}
\end{equation*}
In words: the edge boundary of $A$ is the set of edges exiting the set $A$.  Many different types of graphs have been studied in terms of the edge isoperimetric question, see for example \cite{MR1137765}, \cite{MR1863367}, \cite{MR1357256}, \cite{MR1755430}, \cite{MR1909858}, \cite{MR2021742}.

Although the vertex and edge isoperimetric inequalities have similar statements, often the resulting optimal sets (and thus the techniques used in their proofs) are quite different.  Indeed, this will be the case for the family of graphs that we consider: $G_n = (\Z^n, E_\infty)$.  For $n \in \N$, the vertices of $G_n$ are the integer points $\Z^n$ in $\R^n$.  The edges $E_\infty$ are between pairs of points whose $\ell_\infty$-distance is 1:
\begin{equation*}
E_\infty = \{(u,v) \in \Z^n: ||u-v||_\infty = 1\}
\end{equation*}
where if $u = (u_1, u_2, \dots, u_n), v = (v_1, v_2, \dots, v_n)$, then
\begin{equation*}
||u-v||_\infty = \max_{i=1, 2, \dots, n} \{|u_i-v_i|\}
\end{equation*}
In  \cite{MR2946103}, the author and A.J. Radcliffe gave the vertex isoperimetric inequality for $(\Z^n, E_\infty)$.  The sets of minimum vertex boundary are nested, and the technique of compression was used to prove this.  Compression relies heavily on the fact that sets of minimum boundary are nested.  Discussions of compression as a technique in discrete isoperimetric problems can be found in \cite{MR2035509},   \cite{MR1444247}, \cite{MR1455181}, and \cite{MR1082842}. 

As was shown in \cite{MR2946103}, sets of size $k^2$ with minimum vertex boundary in $(\Z^2, E_\infty)$ are squares of side length $k$.  In addition, if a set has size which is not a perfect square, then the set achieving optimality will be a rectangular box, or a rectangular box with a strip on one side of the box.  Optimal sets of sizes 1 through 8 in $\Z^2$ are shown in Figure \ref{Boxes}.  

\begin{figure}[htbp]
\centering
\includegraphics[width = 1.2in]{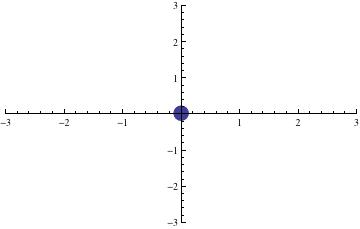} \hspace{.1 cm}
\includegraphics[width = 1.2in]{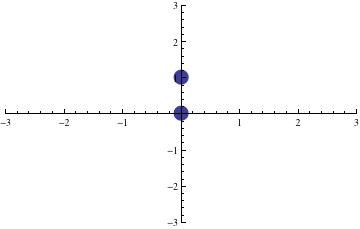}
\includegraphics[width = 1.2in]{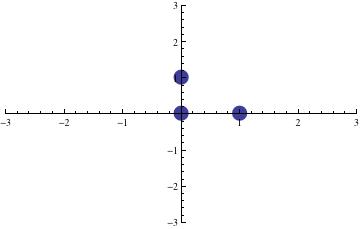}
\includegraphics[width = 1.2in]{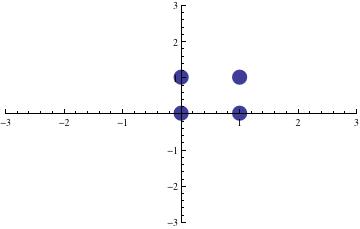}
\includegraphics[width = 1.2in]{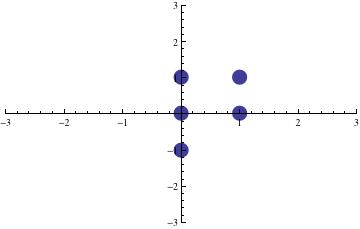}
\includegraphics[width = 1.2in]{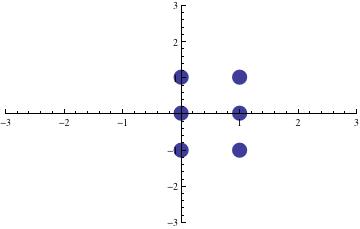}
\includegraphics[width = 1.2in]{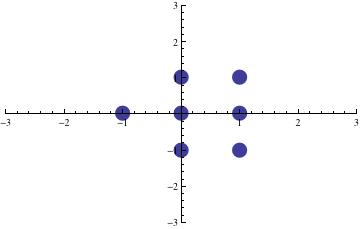}
\includegraphics[width = 1.2in]{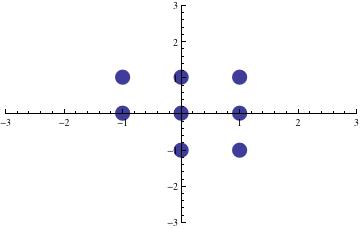}
\caption{Sets of sizes 1 through 8 of minimal vertex boundary in $\Z^2$}
\label{Boxes}
\end{figure}

However, these are not the types of sets which achieve optimal edge boundary.  For example, Figure \ref{Edge1_1} shows a set of size 12 with 36 outgoing edges.  It also has 20 vertex neighbors.  In contrast, Figure \ref{Edge1_2} shows a set of size 12 with 38 outgoing edges and 18 vertex neighbors.  The set in Figure \ref{Edge1_2} has minimal vertex boundary, but in comparison to the set in Figure \ref{Edge1_1}, it cannot have minimal edge boundary.

\begin{figure}[htbp]
\begin{center}
\subfigure[Vertices: 12, Vertex Boundary: 20, Edge Boundary:36]{\label{Edge1_1}\includegraphics[width=2.3 in]{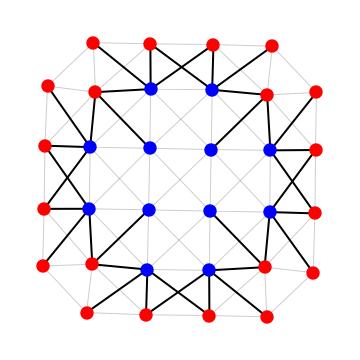}}
\hspace{1 in} \subfigure[Vertices: 12, Vertex Boundary: 18, Edge Boundary:38]{\label{Edge1_2}\includegraphics[width=2.3 in]{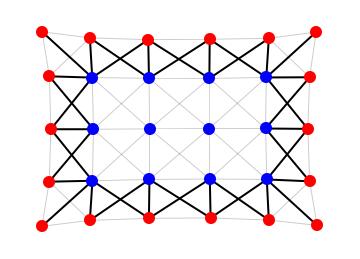}}
\end{center}
\caption{The blue points represent the vertices in the set, the red points are their vertex neighbors in $(\Z^n, E_\infty)$.  Note that \ref{Edge1_1} has a larger vertex boundary than \ref{Edge1_2} but a smaller edge boundary}
\label{Edge1}
\end{figure}

An example in the literature of differing optimal sets when considering vertex boundary versus edge boundary can be found in the graph $(\Z_m^n, E_1)$.  This graph has edge set $\Z_m^n$, where $\Z_m$ denotes the integers modulo $n$.  The edge set consists of all pairs of points whose $\ell_1$ distance is 1:
\begin{equation*}
E_1 = \{(u,v) \in \Z_m^n: ||u-v||_1=1\}
\end{equation*}
where for $u = (u_1, u_2, \dots, u_n)$ and $v=(v_1, v_2, \dots, v_n)$ we have
\begin{equation*}
||u-v||_1 = \sum_{i=1}^n|u_i-v_i|
\end{equation*}
In \cite{DiscTor} Bollob\'as and Leader show that the optimal sets for the vertex isoperimetric problem are nested.  In fact, they correspond to balls using the $\ell_1$-metric.  This is proved using  compression, along with the concepts of fractional systems and symmetrization.  

However, in \cite{MR1137765}, Bollob\'as and Leader show that the optimal sets for the \emph{edge} isoperimetric problem on the same graph are not nested.  They use rectilinear bodies to aid in computing their isoperimetric inequalities.

\section{The Edge Boundary}

We note that we use the \emph{unordered pair} notation for the edges $E_\infty$.  That is, if $(u,v) \in E_\infty$, then we consider $(v,u) \in E_\infty$ and $(u,v) = (v,u)$.

It is not too hard to calculate the edge boundary for a general set $S \subset \Z^n$ in the graph $(\Z^n, E_\infty)$.  First we require a couple of definitions.

For $S \subset \Z^n$ and $\epsilon \in \{-1, 0, 1\}^n$, let $P_\epsilon(S)$  be the projection of $S$ onto $\epsilon^\perp$.  That is,
\begin{equation*}
P_\epsilon(S) = \left\{u-\frac{\left<u,\epsilon\right>}{||\epsilon||_2}\epsilon: u \in S\right\}
\end{equation*}
where for $u = (u_1, u_2, \dots, u_n)$ and $\epsilon = (\epsilon_1, \epsilon_2, \dots, \epsilon_n)$, 
\begin{align*}
\left<u,\epsilon\right> &= \sum_{i=1}^n u_i \epsilon_i \\
||\epsilon||_2 &= \sqrt{\sum_{i=1}^n\epsilon_i^2}
\end{align*}

We also need the following:

\begin{Definition}
Let $S \subset \Z^n$ be finite.  For $\epsilon \in \{-1, 0, 1\}^n$ with $\epsilon \not=0$, we define 
\begin{equation*}
\text{gap}_\epsilon(S) = \{x \in \Z^n: x-\epsilon \in S, x \not\in S, \text{ and } x+b\epsilon \in S \text{ for some }b \geq 1\}
\end{equation*}
Thus, one can think of a point $x \in \text{gap}_\epsilon(S)$ as the first vertex in $\Z^n$ which indicates a gap in $S$ in the line through $x$ in the direction of $\epsilon$.
\end{Definition}

We have the following:

\begin{Theorem}\label{GapThm}
Let $S \subset \Z^n$ be a finite set.  Then
\begin{equation}\label{eqn1}
|\partial_e (S)| = \sum_{\epsilon \in \{-1,0,1\}^n, \epsilon \not=0}\left( |P_\epsilon(S)| +|\text{gap}_\epsilon(S)|\right)
\end{equation}
\end{Theorem}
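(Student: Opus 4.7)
The plan is to partition the edge boundary according to the ``direction'' of each edge. Every edge in $\partial_e(S)$ is an unordered pair with exactly one endpoint in $S$; letting $v$ denote that endpoint and setting $\epsilon$ to be the other endpoint minus $v$, every boundary edge admits a unique representation as $\{v, v + \epsilon\}$ with $v \in S$, $v + \epsilon \notin S$, and $\epsilon \in \{-1,0,1\}^n \setminus \{0\}$. This yields
\begin{equation*}
|\partial_e(S)| = \sum_{\epsilon \in \{-1,0,1\}^n, \epsilon \neq 0} \#\{v \in S : v + \epsilon \notin S\},
\end{equation*}
and so it suffices to prove, for each fixed nonzero $\epsilon$, that
\begin{equation*}
\#\{v \in S : v + \epsilon \notin S\} = |P_\epsilon(S)| + |\text{gap}_\epsilon(S)|.
\end{equation*}

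I would then partition $S$ along the integer lines parallel to $\epsilon$. Each such line $\ell$ meeting $S$ can be parametrized by $\Z$ via $t \mapsto v_0 + t\epsilon$, making $S \cap \ell$ a finite subset of $\Z$ which decomposes uniquely into some number $k_\ell$ of maximal runs of consecutive integers. Three facts would then be checked line by line: (a) the $v \in S \cap \ell$ with $v + \epsilon \notin S$ are exactly the right-endpoints of the $k_\ell$ runs, giving $k_\ell$ contributions to the left-hand side; (b) $\ell$ contributes exactly $1$ to $|P_\epsilon(S)|$, since $P_\epsilon$ collapses $\ell$ to a single point of $\epsilon^\perp$; (c) $\ell$ contributes exactly $k_\ell - 1$ to $|\text{gap}_\epsilon(S)|$, because the points of $\text{gap}_\epsilon(S)$ lying on $\ell$ are in bijection with the inter-run gaps of $S \cap \ell$. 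Since $k_\ell = 1 + (k_\ell - 1)$, summing (a)--(c) over all lines gives the identity.

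The only substantive step is verifying (c). Unpacking the definition, a point $x$ belongs to $\text{gap}_\epsilon(S)$ exactly when $x \notin S$, $x - \epsilon \in S$, and $x + b\epsilon \in S$ for some $b \geq 1$; restricting to the line $\ell$, this forces $x$ to lie immediately after the right-endpoint of one run of $S \cap \ell$ and strictly before some later run, so the gap points on $\ell$ correspond precisely to the $k_\ell - 1$ internal gaps between consecutive runs. Once this correspondence is written down carefully, the remainder of the argument is pure bookkeeping; I do not anticipate any more serious obstacle.
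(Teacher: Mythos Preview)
Your argument is correct and takes a genuinely different route from the paper. The paper proves the identity by induction on $|S|$: it verifies the base case $|S|=1$, then removes a point $u$ and checks, for each pair $\{\epsilon,-\epsilon\}$, that adding $u$ back changes the left- and right-hand sides of \eqref{eqn1} by the same amount (splitting into three cases according to which of $u\pm\epsilon$ lie in $S$). Your proof is instead a direct double-count: you slice the whole problem into the one-dimensional fibres $\{v_0+t\epsilon:t\in\Z\}$ and observe that on each fibre the identity reduces to the trivial fact that a finite subset of $\Z$ with $k$ maximal runs has $k$ ``right endpoints'' and $k-1$ ``internal gap starts,'' while contributing exactly one point to the projection. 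This is arguably more transparent than the inductive bookkeeping, since it explains structurally why projections and gaps together account for all outgoing edges. The paper's approach, on the other hand, avoids having to verify that distinct $\epsilon$-lines in $\Z^n$ have distinct images under $P_\epsilon$---a small point your write-up should make explicit (it follows because $\epsilon\in\{-1,0,1\}^n$, so $v_0-v_1=c\epsilon$ with $v_0,v_1\in\Z^n$ forces $c\in\Z$).
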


\begin{proof}
We proceed by induction on $|S|$.  If $|S| =1$, then $\text{gap}_\epsilon(S) = \emptyset$ for each $\epsilon \in \{-1, 0, 1 \}^n, \epsilon \not=0$.  We can also see that if $S = \{u\}$, then
\begin{equation*}
\partial_\epsilon( S) = \{(u, u+\epsilon): \epsilon \in \{-1,0,1\}^n: \epsilon \not= 0\}
\end{equation*}
We can also see that in this case, 
\begin{equation*}
|P_\epsilon(S)| = 1
\end{equation*}
for each $\epsilon \in \{-1, 0, 1\}^n, \epsilon \not= 0$.  Thus, we have 
\begin{equation*}
|\partial_e (S)| = \sum_{\epsilon \in \{-1,0,1\}^n, \epsilon \not=\vec{0}}\left( |P_\epsilon(S)| +|\text{gap}_\epsilon(S)|\right)
\end{equation*}
if $|S|=1$.

Now suppose that $|S|>1$.  Fix $u \in S$.  By induction,
\begin{equation*}
|\partial_e (S\backslash\{u\})| = \sum_{\epsilon \in \{-1,0,1\}^n, \epsilon \not=0}\left( |P_\epsilon(S\backslash\{u\})| +  |\text{gap}_\epsilon(S\backslash\{u\})|\right)
\end{equation*}
Consider what $u$ contributes to the edge boundary of $S$.  Note that each $\epsilon \in \{-1,0,1\}^n, \epsilon \not=0$ can be uniquely paired with $-\epsilon \in \{-1,0,1\}^n$.  We have three cases:

\noindent \underline{Case 1: Both $u+\epsilon$ and $u-\epsilon$ are in $S$}  In this case, 
\begin{align*}
(u+\epsilon, u)  \in \partial_e(S\backslash \{u\}) & \quad\quad \quad   (u-\epsilon, u)  \in \partial_e(S\backslash \{u\})  \\
(u+\epsilon, u)  \not\in \partial_e(S) &  \quad\quad \quad   (u-\epsilon, u)  \not\in \partial_e(S)
\end{align*}
and
\begin{equation*}
\text{gap}_\epsilon(S) =  \text{gap}_\epsilon(S\backslash\{u\})\backslash\{u\} \quad  \quad \quad \text{gap}_{-\epsilon}(S) = \text{gap}_{-\epsilon}(S\backslash\{u\})\backslash\{u\}
\end{equation*}
Thus we can see that both the left and right hand sides of equation \eqref{eqn1} go down by 2 corresponding to edges $(u, u+\epsilon), (u, u-\epsilon)$ when $u$ is added back to $S$.

\noindent \underline{Case 2: Exactly one of  $u+\epsilon$ or $u-\epsilon$ is in $S$}  Here, without loss of generality, assume that $u-\epsilon \in S$.  Then
\begin{align*}
(u-\epsilon, u)  \in \partial_e(S\backslash \{u\}) & \quad\quad \quad   (u+\epsilon, u)  \not\in \partial_e(S\backslash \{u\})  \\
(u-\epsilon, u)  \not\in \partial_e(S) &  \quad\quad \quad   (u+\epsilon, u)  \in \partial_e(S)
\end{align*}
and
\begin{equation*}
\text{gap}_\epsilon(S) =  \text{gap}_\epsilon(S\backslash\{u\})\backslash\{u\} \cup \{u+\epsilon\} \quad  \quad \quad \text{gap}_{-\epsilon}(S) = \text{gap}_{-\epsilon}(S\backslash\{u\})
\end{equation*}

Thus we can see that both the left and right hand sides of equation \eqref{eqn1} do not change corresponding to edges $(u, u+\epsilon), (u, u-\epsilon)$ when $u$ is added back to $S$.

\noindent \underline{Case 3: Neither $u+\epsilon$ nor $u-\epsilon$ are in $S$}

In this case, 
\begin{align*}
(u+\epsilon, u)  \not\in \partial_e(S\backslash \{u\}) & \quad\quad \quad   (u-\epsilon, u)  \not\in \partial_e(S\backslash \{u\})  \\
(u+\epsilon, u)  \in \partial_e(S) &  \quad\quad \quad   (u-\epsilon, u)  \in \partial_e(S)
\end{align*}
and
\begin{equation*}
\text{gap}_\epsilon(S) =  \text{gap}_\epsilon(S\backslash\{u\})\cup \{u+\epsilon\} \quad  \quad \quad \text{gap}_{-\epsilon}(S) = \text{gap}_{-\epsilon}(S\backslash\{u\})\cup \{u-\epsilon\}
\end{equation*}

Thus we can see that both the left and right hand sides of equation \eqref{eqn1} go up by 2 corresponding to edges $(u, u+\epsilon), (u, u-\epsilon)$ when $u$ is added back to $S$.

Since $\epsilon$ was arbitrary, we can see that all of the changes between $\partial_e(S \backslash\{u\})$ and $\partial_e(S)$ are balanced out by changes in the corresponding gaps.  Thus, we have 
\begin{equation}\label{eqn1}
|\partial_e (S)| = \sum_{\epsilon \in \{-1,0,1\}^n, \epsilon \not=0}\left( |P_\epsilon(S)| +|\text{gap}_\epsilon(S)|\right)
\end{equation}

\end{proof}

Theorem \ref{GapThm} clearly has the following corollary:

\begin{Corollary}
Let $S \subset \Z^n$ be a finite set such that $\text{gap}_\epsilon(S) = \emptyset$ for each $\epsilon \in \{-1,0,1\}^n, \epsilon \not=0$.  Then
\begin{equation}\label{eqn1}
|\partial_e (S)| = \sum_{\epsilon \in \{-1,0,1\}^n, \epsilon \not=0}|P_\epsilon(S)| 
\end{equation}

\end{Corollary}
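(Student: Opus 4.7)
The plan is essentially a one-line reduction: apply Theorem \ref{GapThm} directly and observe that the hypothesis wipes out half the summand. More concretely, Theorem \ref{GapThm} gives, for any finite $S \subset \Z^n$,
\begin{equation*}
|\partial_e(S)| = \sum_{\epsilon \in \{-1,0,1\}^n,\, \epsilon \not=0}\bigl(|P_\epsilon(S)| + |\text{gap}_\epsilon(S)|\bigr).
\end{equation*}
Under the assumption that $\text{gap}_\epsilon(S) = \emptyset$ for every nonzero $\epsilon \in \{-1,0,1\}^n$, each term $|\text{gap}_\epsilon(S)|$ in the sum equals zero. Substituting into the displayed identity yields exactly the desired equation. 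There is no obstacle here; the corollary is purely a specialization, and the only step is to record that $|\emptyset| = 0$ so the gap contributions drop out.
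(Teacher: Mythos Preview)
Your proposal is correct and matches the paper's own treatment: the paper states this corollary as an immediate consequence of Theorem~\ref{GapThm} without further argument, exactly as you do. There is nothing to add.
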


which is a much more satisfying result, as it only involves $n-1$-dimensional projections of $S$, and is a nice counterpoint to the vertex boundary calculations in \cite{MR2946103}.  This leads to the natural desire to show that it is possible to ``squish'' any set $S \subset \Z^n$ to form a new set $S_0$ such that $|S| = |S_0|$, $\partial_e(S)\geq \partial_e(S_0)$,  and $S_0$ has no gaps (that is, $\text{gap}_\epsilon(S_0) = \emptyset$ for each $\epsilon \in \{-1,0,1\}^n, \epsilon \not=0$).  

In the following section, we show how we can ``squish'' set $S$ into set $S_i$ so that $|S| = |S_i|, \partial_e(S) \geq \partial_e(S_i)$, and
\begin{equation*}
\text{gap}_{e_i}(S_i) = \emptyset
\end{equation*}
where $e_i \in \{-1,0,1\}^n$ is the $i$th standard basis vector.

\section{Central Compression}

The following notation and definitions are similar to those  in \cite{MR2946103}. 
For simplicity, we introduce the following notation: for a real-valued vector $p = (p_1, p_2, \dots, p_n) \in \R^n$, and $x \in \R$, we define
\begin{equation*}
(p, x \rightarrow i) = (p_1, p_2, \dots, p_{i-1}, x, p_i, p_{i+1}, \dots, p_n) \in \R^{n+1}
\end{equation*}
In words, $(p, x \rightarrow i)$ is the vector that results when placing $x$ in the $i$th coordinate of $p$ and shifting the $i$th through $n$th coordinates of $p$ to the right.

\begin{Definition}
We say that a set $S \subset \Z^n$ is \emph{centrally compressed} in the $i$-th coordinate ($1 \leq i \leq n$) with respect to $p \in \Z^{n-1}$ if the set
\begin{equation*}
\{x \in \Z: (p, x \rightarrow i) \in S\}
\end{equation*}
is either empty or of one of the following two forms:
\begin{align*}
\{x: -a  \leq x &\leq a \text{ for } a \in \N\} \\
& \text{OR} \\
\{x: -a \leq x &\leq a+1 \text{ for } a \in \N\}
\end{align*}

\end{Definition}

This definition allows us to define the $i$th central compression of a set:

\begin{Definition}
Let $S \subset \Z^n$.  For $1 \leq i \leq n$, we define $S_i$ to be the $i$th central compression of $S$  by specifying its 1-dimensional sections in the $i$th coordinate.  Specifically, 
\begin{enumerate}
\item For each $p \in \Z^{n-1}$, 
\begin{equation*}
|\{x \in \Z: (p, x \rightarrow i) \in S_i\}| 
= |\{x \in \Z: (p, x \rightarrow i) \in S\}| 
\end{equation*}
\item $S_i$ is centrally compressed in the $i$th coordinate with respect to $p$ for each $p \in \Z^{n-1}$.
\end{enumerate}

\end{Definition}

In words: after fixing a coordinate $i \in \{1, 2, \dots, n\}$, we consider all lines in $\Z^n$ where only the $i$th coordinate varies, and we intersect those lines with $S$.  Each of the points in those intersections are moved along the line so that they are a segment centered around 0.  The result is $S_i$.  

The following Proposition shows how we can ``squish'' set $S$ into set $S_i$ so that $|S| = |S_i|, \partial_e(S) \geq \partial_e(S_i)$, and
\begin{equation*}
\text{gap}_{e_i}(S_i) = \emptyset
\end{equation*}
where $e_i \in \{-1,0,1\}^n$ is the $i$th standard basis vector.

\begin{Proposition}\label{Compression1}
Suppose that $S \subset \Z^n$.    For $1 \leq i \leq n$, let $S_i$ be the $i$th central compression of $S$.  Then 
\begin{equation*}
\left|\partial_e S_i\right| \leq \left| \partial_e S \right|
\end{equation*}
\end{Proposition}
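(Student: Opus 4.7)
The plan is to use Theorem~\ref{GapThm} to write $|\partial_e(S)|$ as a sum over direction vectors $\epsilon \in \{-1,0,1\}^n \setminus \{0\}$, and to organize these contributions by the non-$i$ component of $\epsilon$. For $p \in \Z^{n-1}$ and $T \subset \Z^n$, write $C_p(T) = \{t \in \Z : (p, t \rightarrow i) \in T\}$ for the $i$-column of $T$ at $p$; by construction $|C_p(S_i)| = |C_p(S)|$, and each $C_p(S_i)$ is a centered interval.

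Group each $\epsilon = (\epsilon', \epsilon_i)$ by its non-$i$ part $\epsilon' \in \{-1,0,1\}^{n-1}$. For $\epsilon' = 0$ (so $\epsilon = \pm e_i$), the combined contribution to the right-hand side of Theorem~\ref{GapThm} equals $2\sum_p k_p$, where $k_p$ is the number of maximal segments of $C_p(S)$; after compression each nonempty column is a single segment, so this contribution weakly decreases. For each nonzero $\epsilon'$, the three direction pairs $\{(\epsilon', s), (-\epsilon', -s)\}$ with $s \in \{-1, 0, 1\}$ together enumerate all boundary edges running between columns $p$ and $p + \epsilon'$, and their combined count is
\[
\sum_{p \in \Z^{n-1}} \sum_{s \in \{-1,0,1\}} |C_p(S) \triangle (C_{p + \epsilon'}(S) - s)|.
\]
Thus the proposition reduces to showing that this double sum does not increase when $S$ is replaced by $S_i$, for every nonzero $\epsilon'$.

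The central task is the following one-dimensional rearrangement lemma: for any finite $A, B \subset \Z$ with $|A| = a$ and $|B| = b$,
\[
\sum_{s \in \{-1,0,1\}} |A \triangle (B-s)| \;\geq\; \sum_{s \in \{-1,0,1\}} |I(a) \triangle (I(b)-s)|,
\]
where $I(k)$ denotes the centered interval of size $k$ prescribed by the definition of central compression. Since $|X \triangle Y| = |X| + |Y| - 2|X \cap Y|$ and cardinalities are preserved, this is equivalent to the claim that the ``triple overlap'' $\sum_s |A \cap (B + s)| = |\{(x, y) \in A \times B : |x - y| \leq 1\}|$ is maximized when both $A$ and $B$ are centered intervals. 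Applying this pair-by-pair to $(C_p(S), C_{p + \epsilon'}(S))$ and summing over $p$ yields the weak decrease required for each nonzero $\epsilon'$; combined with the $\epsilon' = 0$ case, this gives $|\partial_e S_i| \leq |\partial_e S|$.

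The main obstacle is the one-dimensional lemma, which is a discrete Riesz--Sobolev-type rearrangement inequality. I would attack it via the function $g_B(t) := |B \cap \{t-1, t, t+1\}|$, so that $\sum_s |A \cap (B+s)| = \sum_{t \in A} g_B(t)$. Maximizing over $A$ of size $a$ selects the top $a$ values of $g_B$; maximizing over $B$ of size $b$ then amounts to showing that the sorted sequence of $g_B$-values is weakly majorized by that of $g_{I(b)}$. Because $g_B(t) \leq 3$ with equality exactly when $\{t-1, t, t+1\} \subset B$, the number of such ``deep interior'' points is maximized (equal to $b-2$) precisely when $B$ is an interval, and a short layer-cake / super-level-set comparison upgrades this to the required majorization. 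Finally, the triangular shape of $g_{I(b)}$ guarantees that its top $a$ values are realized by $I(a)$, producing the right-hand side.
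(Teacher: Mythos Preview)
Your approach is correct and genuinely different from the paper's. The paper does not invoke Theorem~\ref{GapThm} at all; instead it partitions $\partial_e(S)$ directly into the pieces $\partial_e(S,p,\epsilon')$ (boundary edges with $S$-endpoint in column $p$ and other endpoint in column $p+\epsilon'$) and then, for each fixed pair of adjacent columns, performs an explicit sequence of one-step shifts---handled by a six-case picture analysis---to show that removing gaps in each column never increases the one-sided count $|\partial_e(S,p,\epsilon')|$. Your route instead bundles the two one-sided counts into the symmetric quantity $\sum_{s}|C_p\triangle(C_{p+\epsilon'}-s)|$ and reduces the whole problem to a single discrete Riesz--Sobolev-type inequality on $\Z$: the pairing count $|\{(x,y)\in A\times B:|x-y|\le 1\}|$ is maximized, among sets of prescribed sizes, by the centred intervals $I(a),I(b)$.

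What each approach buys: the paper's shifting argument is entirely elementary and self-contained, at the cost of a somewhat informal case analysis carried by figures. Your argument is more structural and would generalize immediately (for example to any symmetric window $\{-r,\dots,r\}$ in place of $\{-1,0,1\}$, hence to $\ell_\infty$-graphs with longer-range edges), but it leans on the one-dimensional rearrangement lemma. Your sketch of that lemma is essentially right; to make it airtight you should record both super-level-set facts, not just the one for $g_B=3$: with $N_c(B)=|\{t:g_B(t)\ge c\}|$ one has $N_3(B)\le b-2$ \emph{and} $N_1(B)\ge b+2$ (the latter is the $1$-neighbourhood lower bound on $\Z$), while $N_1+N_2+N_3=3b$ always. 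These three facts say exactly that $(N_1(B),N_2(B),N_3(B))$ majorizes $(b{+}2,\,b,\,b{-}2)=(N_1,N_2,N_3)(I(b))$, and since $\sum_c\min(a,N_c)$ is Schur-concave the top-$a$ comparison follows. Finally, for the claim that $I(a)$ realizes the top $a$ values of $g_{I(b)}$, note that $g_{I(b)}$ is unimodal and symmetric about the midpoint of $I(b)$; when $a$ and $b$ have opposite parity there are two optimal length-$a$ intervals, and the definition of $I(a)$ picks one of them, so a one-line parity check is worth including.
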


\begin{proof}
Suppose that $S \subset \Z^n$ and fix $i \in \{1, 2, \dots, n\}$.  First we note that we can count the edge boundary of $S$ by partitioning the outgoing edges of $S$ into the sets of edges coming from each 1-dimensional $i$-section of $S$.  Specifically for $p \in \Z^{n-1}$, let
\begin{equation*}
\partial_e(S,p) = \{(u,v) \in E_\infty: u \in S, v \in \Z^n \backslash S \text{ and } u = (p, x \rightarrow i) \text{ for some } x \in \Z\}
\end{equation*}
Then we have
\begin{equation*}
\partial_e S = \bigcup_{p \in \Z^{n-1}} \partial_e (S,p)
\end{equation*}
and the above union is disjoint.   

We can partition these even further, based on which 1-dimensional section the vertex which is not in $S$ lies.  That is,  if $(u,v) \in E_\infty$ with $u \in S$, $v \in \Z^n \backslash S$, and $u = (p, x \rightarrow i)$ for  $x \in \Z$, then we must have
\begin{equation*}
v = (p +\epsilon, y \rightarrow j)
\end{equation*}
for some $\epsilon \in \{-1,0,1\}^{n-1}$ and $y \in \Z$ (specifically, $y \in \{x-1, x, x+1\}$).  Let
\begin{multline*}
\partial_e (S,p,\epsilon) =  \{(u,v) \in E_\infty: u \in S, v \in \Z^n \backslash S,  u = (p, x \rightarrow i) \text{ for some } x \in \Z,   \\
\text{ and } v = (p+\epsilon, y \rightarrow i) \text{ for some }y \in \Z\}.
\end{multline*}
Then
\begin{equation*}
\partial_e(S,p) = \bigcup_{\epsilon \in \{-1,0,1\}^{n-1}} \partial_e(S, p,\epsilon)
\end{equation*}
and the union is disjoint.  Thus, we have

\begin{equation*}
\partial_e S = \bigcup_{p \in \Z^{n-1}} \bigcup_{\epsilon \in \{-1,0,1\}^{n-1}} \partial_e(S,p,\epsilon)
\end{equation*}
and the above unions are disjoint, so that
\begin{equation*}
\left|  \partial_e S \right| = \sum_{p \in \Z^{n-1}} \sum_{\epsilon \in \{-1,0,1\}^{n-1}} \left|  \partial_e(S,p,\epsilon)  \right|
\end{equation*}
Similarly,
\begin{equation*}
\left|  \partial_e S_i \right| = \sum_{p \in \Z^{n-1}} \sum_{\epsilon \in \{-1,0,1\}^{n-1}} \left|  \partial_e(S_i,p,\epsilon)  \right|
\end{equation*}
We will show that $\left| \partial_e S_i \right| \leq \left| \partial_e S \right|$ by showing that
\begin{equation*}
 \left|  \partial_e(S_i,p,\epsilon)  \right| \leq  \left|  \partial_e(S,p,\epsilon)  \right|
\end{equation*}
for each $p \in \Z^{n-1}$ and $\epsilon \in \{-1,0,1\}^{n-1}$.

It is straightforward to see that for $\vec{0} \in \{-1, 0, 1\}^{n-1}$, 
\begin{align*}
 \left|  \partial_e(S_i,p,\vec{0}) \right|&= 2 \\
\left|  \partial_e(S,p,\vec{0})  \right| & \geq 2
\end{align*}
so that
\begin{equation*}
 \left|  \partial_e(S_i,p,\vec{0})  \right| \leq  \left|  \partial_e(S,p,\vec{0})  \right|
\end{equation*}

Thus, we can now consider a fixed $p$ and fixed $\epsilon \not=0$.  Let
\begin{align*}
\ell &= \{(p, x\rightarrow i)\in S : x \in \Z \} \\
n &= \{(p+\epsilon, y \rightarrow i)\in S: y \in \Z\}
\end{align*}
be the lines of vertices in $S$ corresponding to fixing all entries except the $i$th by the entries in vectors $p$ and $p+\epsilon$ respectively.  Note that we can visualize these lines as integer points in the plane, with the lines parallel to the $x$-axis.

For all of our visualizations, the upper line will denote $n$, the lower line $\ell$.  Open circles are vertices in $n$ or $\ell$, filled in circles are vertices which are not in $n$ or $\ell$.  The solid lines are edges which are definitely in $\partial_e(S,p,\epsilon)$; the dotted lines are edges which may be in $\partial_e(S,P,\epsilon)$ (depending on whether particular vertices are in $\ell$ and $n$).  See Figure \ref{VisualizationP}.

\begin{figure}[h]
\begin{center}
\includegraphics[width=4 in]{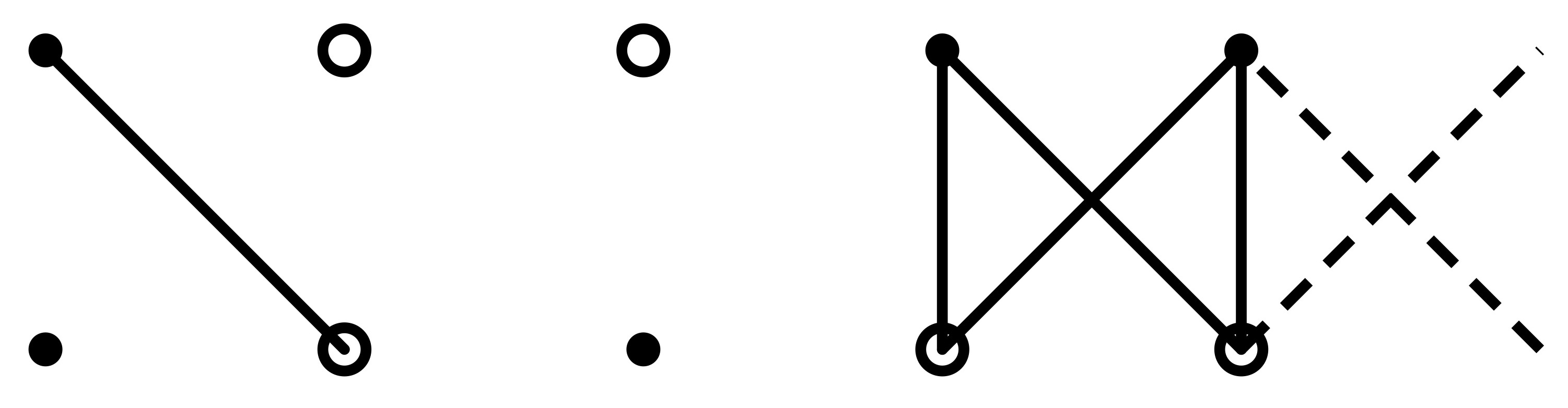}
\end{center} 
\caption{Sample visualization of $n$ and $\ell$.}
\label{VisualizationP}
\end{figure}

We say that there is a ``gap'' in the line $\ell$ at $b$ if there exist $a<b<c$ such that $(p, b \rightarrow i) \not\in \ell$, but $(p, a \rightarrow i) \in \ell$ and $(p, c \rightarrow i) \in \ell$.  See Figure \ref{NewGapP}.

\begin{figure}[h]
\begin{center}
\includegraphics[width=4 in]{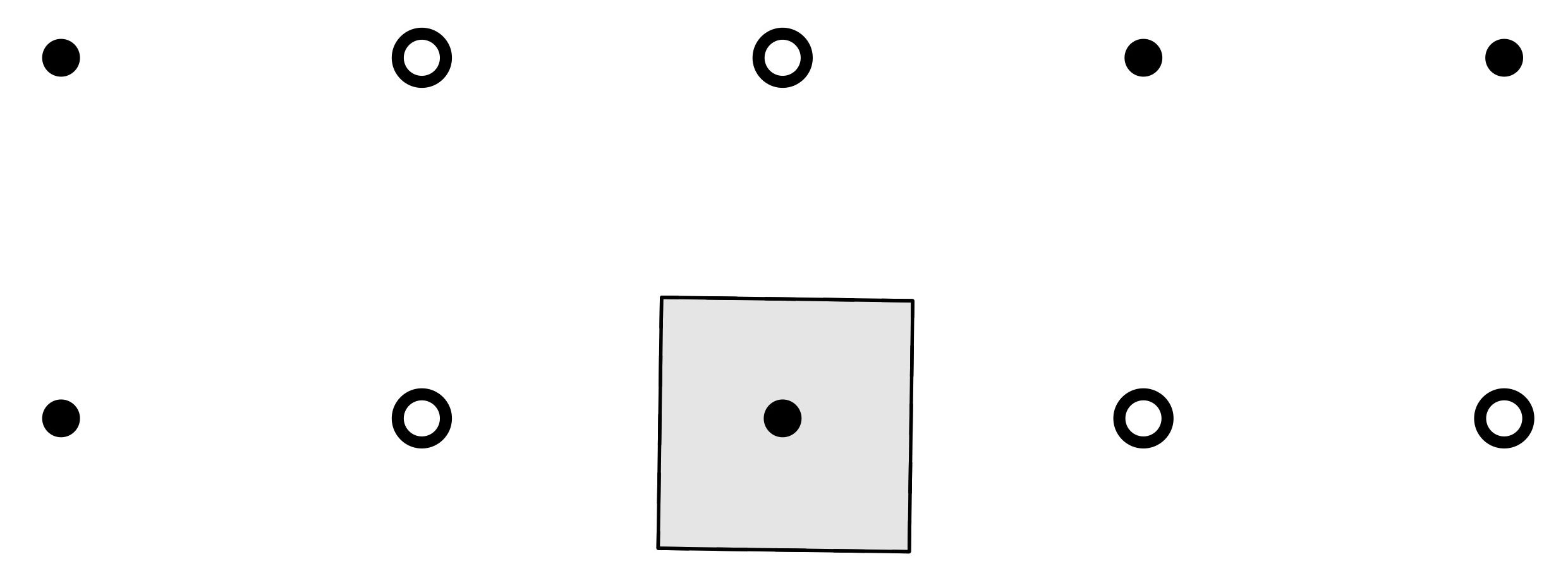}
\end{center} 
\caption{The gap in $\ell$ is highlighted.}
\label{NewGapP}
\end{figure}
We can similarly define a gap in $n$.

We claim that we can rearrange the vertices within $\ell$ and $n$ by perhaps changing the $i$th coordinate of some vertices to make new lines of vertices $\ell_0$ and $n_0$ with no gaps, such that $\partial_e(\ell \cup n, p, \epsilon) \geq \partial_e(\ell_0 \cup n_0,p,\epsilon)$.  We do this in a sequence of steps.

First note that if both $\ell$ and $n$ have a gap at $b$, then the vertices of $\ell$ to the left of the gap and the vertices of $n$ to the left of the gap can all be shifted to the right by 1 without increasing $\partial_e(\ell \cup n, p, \epsilon)$; only possibly decreasing $\partial_e(\ell \cup n, p, \epsilon)$ (See Figure \ref{SampleShift}).

\begin{figure}[htbp]
\begin{center}
\includegraphics[width=2 in]{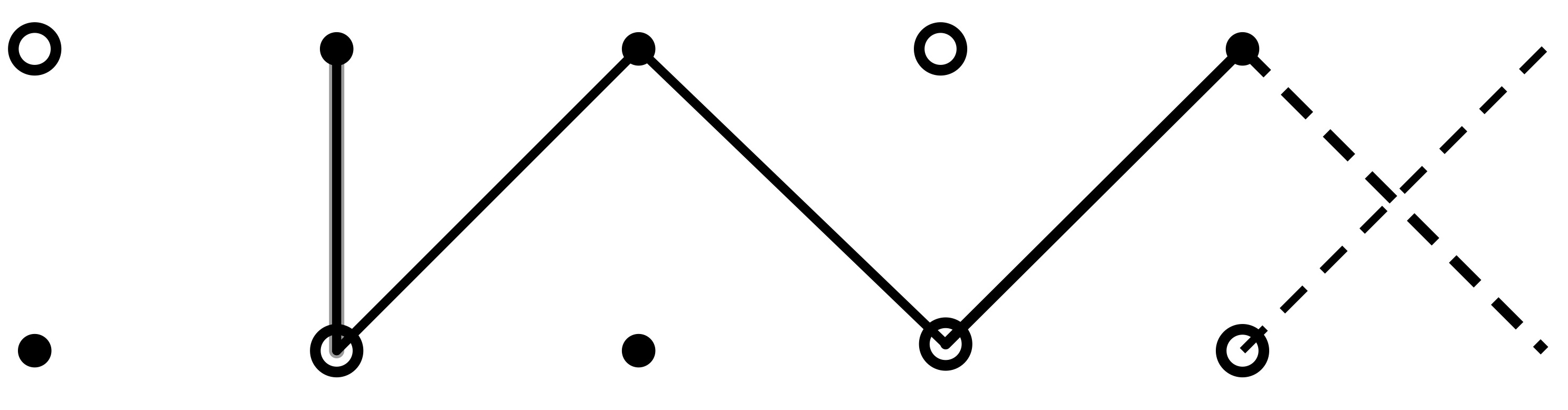}
\hspace{.5 cm}
\includegraphics[width=.5 in]{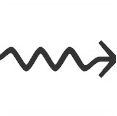}
\hspace{.5 cm}
\includegraphics[width=2 in]{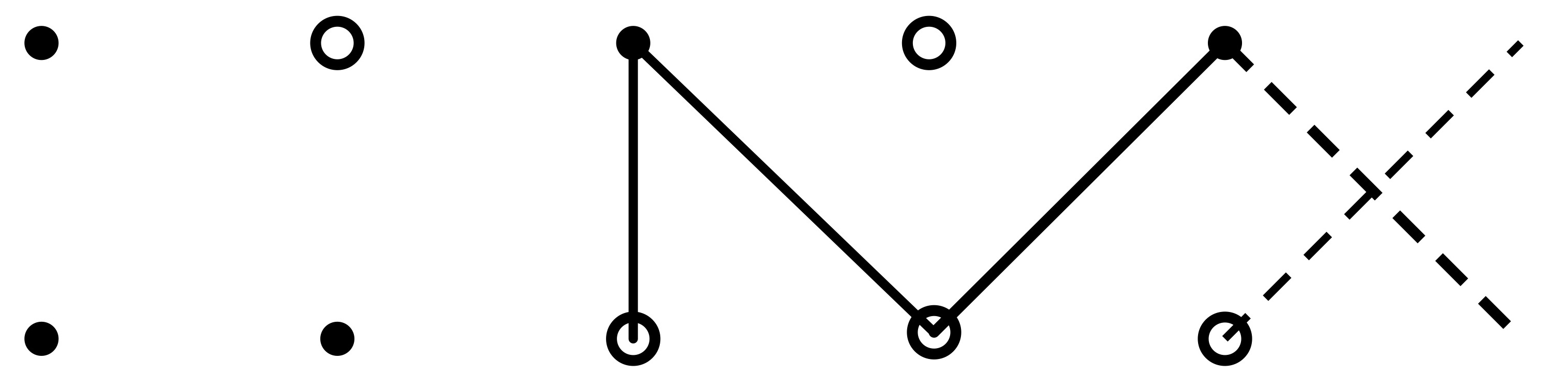}
\end{center} 
\caption{The parallel gap is eliminated by shifting vertices to the right.  In this case, the edge boundary decreases.}
\label{SampleShift}
\end{figure}

Thus, we can assume that there are no such parallel gaps.

Let $\ell^i$ be the minimum value of $x$ for any point $(p,x \rightarrow i) \in \ell$ and let $n^i$ be the minimum value of $y$ for any point $(p,y\rightarrow i) \in n$.

We now split this problem into 6 cases:
\begin{enumerate}
\item $\ell^i = n^i$, there is a gap in $\ell$ at $b$, and there is no gap in either $\ell$ or $n$ for any $a<b$.
\item $\ell^i = n^i$, there is a gap in $n$ at $b$, and there is no gap in either $\ell$ or $n$ for any $a<b$.
\item $\ell^i = n^i+1$
\item $\ell^i = n^i-1$
\item $\ell^i = n^i+c$ where $c \geq 2$
\item $\ell^i = n^i-c$ where $c \geq 2$.
\end{enumerate}
In each of these cases, we can see that the number of edges going from a vertex in $\ell$ to a vertex not in $n$ (i.e. the number of edges in $\partial_e(\ell \cup n, p, \epsilon)$) either does not change or decreases by shifting some vertices in $n$ to the right or shifting some vertices in $\ell$ to the right.  This can more easily be seen through pictures:

\begin{enumerate}
\item $\ell^i = n^i$, there is a gap in $\ell$ at $b$, and there is no gap in either $\ell$ or $n$ for any $a<b$.
\begin{figure}[htbp]
\begin{center}
\includegraphics[width=1.3 in]{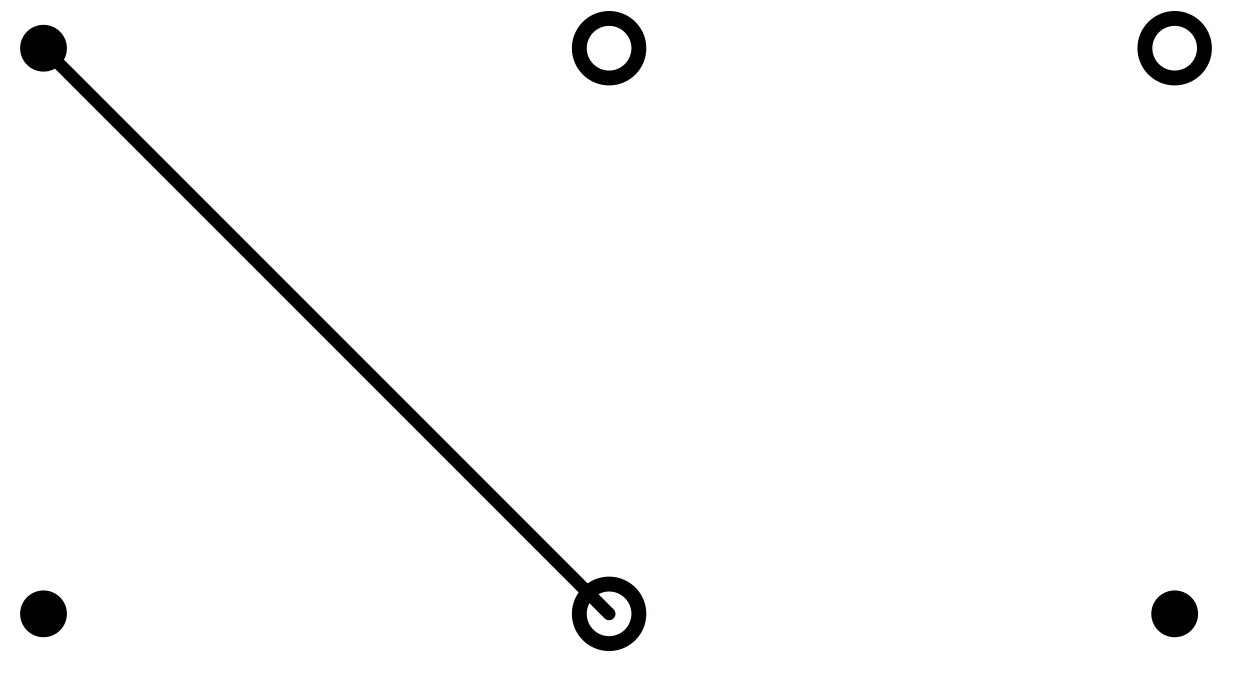}
\hspace{.7 cm}
\includegraphics[width=.5 in]{rsarrow.jpg}
\hspace{.7 cm}
\includegraphics[width=1.9 in]{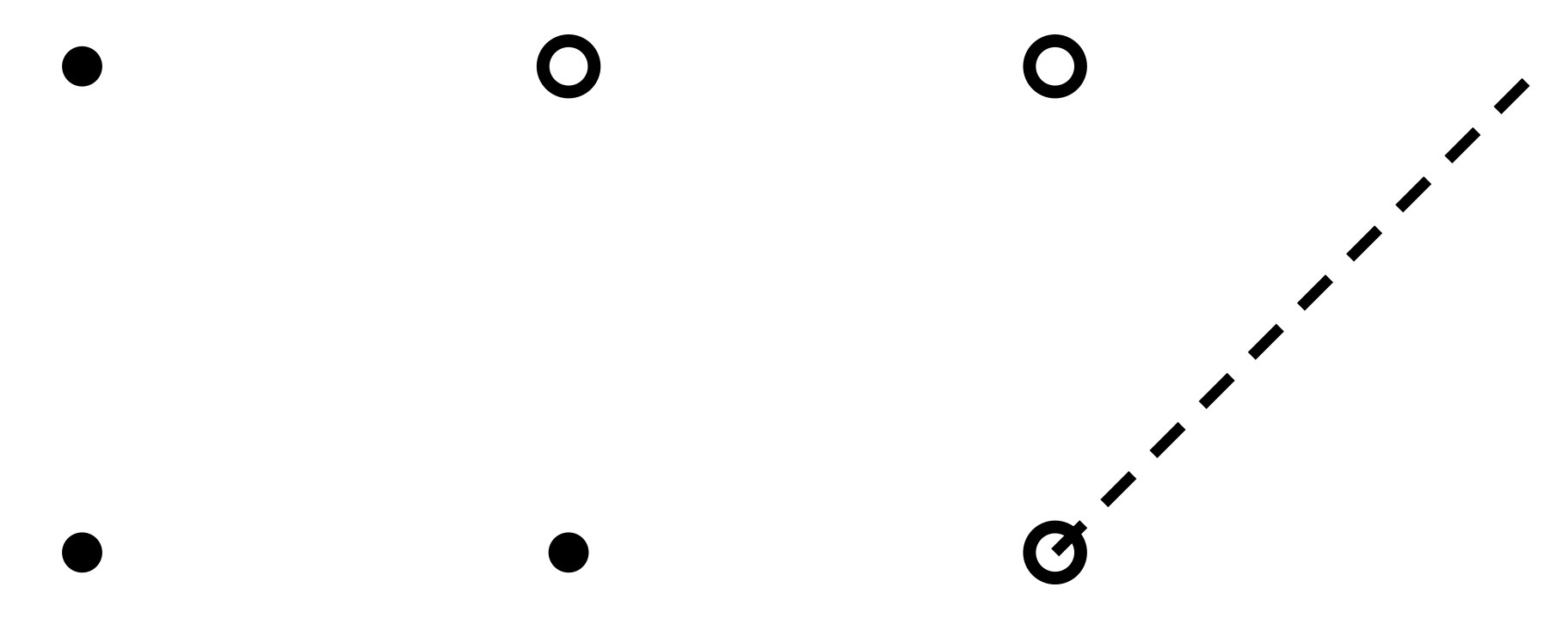}
\end{center} 
\caption{Vertices in $\ell$ to the left of the gap are shifted right by 1.}
\end{figure}

\item $\ell^i = n^i$, there is a gap in $n$ at $b$, and there is no gap in either $\ell$ or $n$ for any $a<b$.

\begin{figure}[htbp]
\begin{center}
\includegraphics[width=1.8 in]{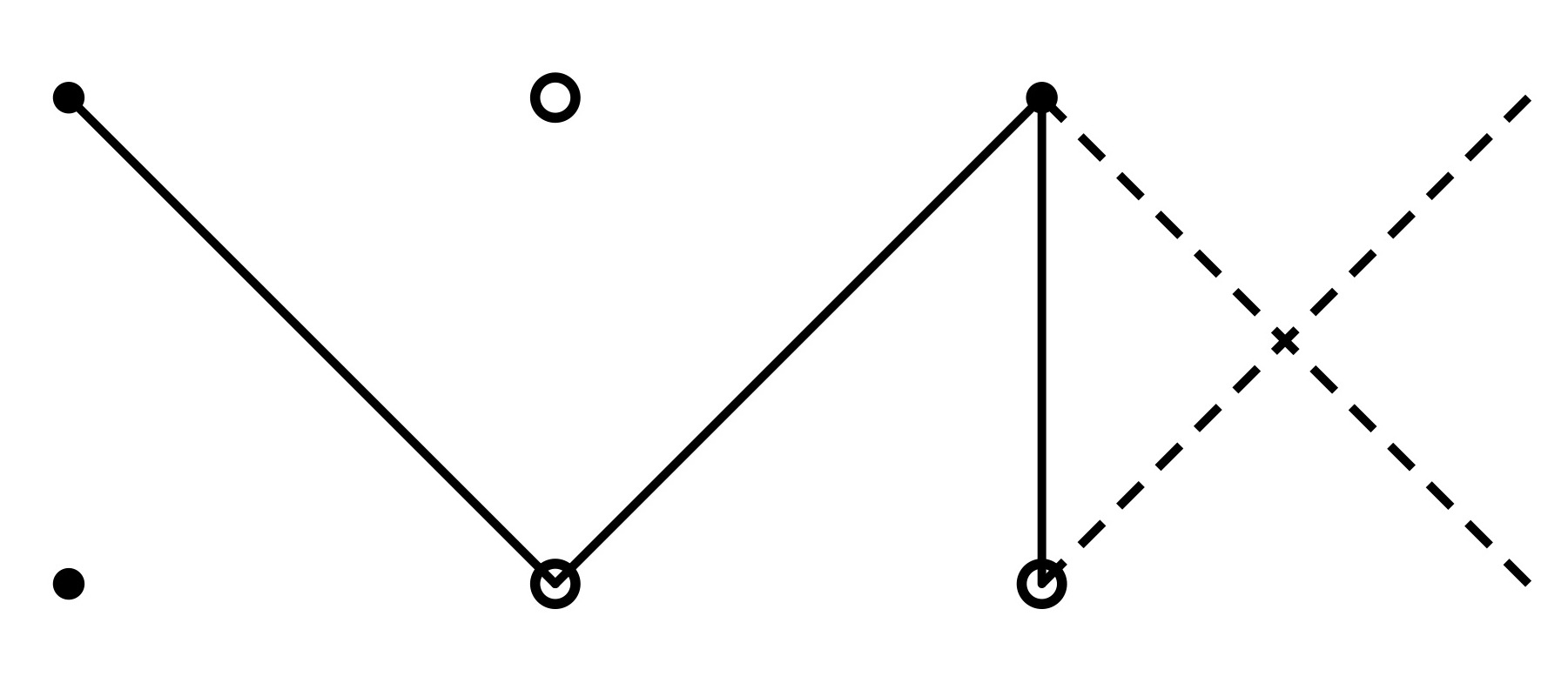}
\hspace{.7 cm}
\includegraphics[width=.5 in]{rsarrow.jpg}
\hspace{.7 cm}
\includegraphics[width=1.8 in]{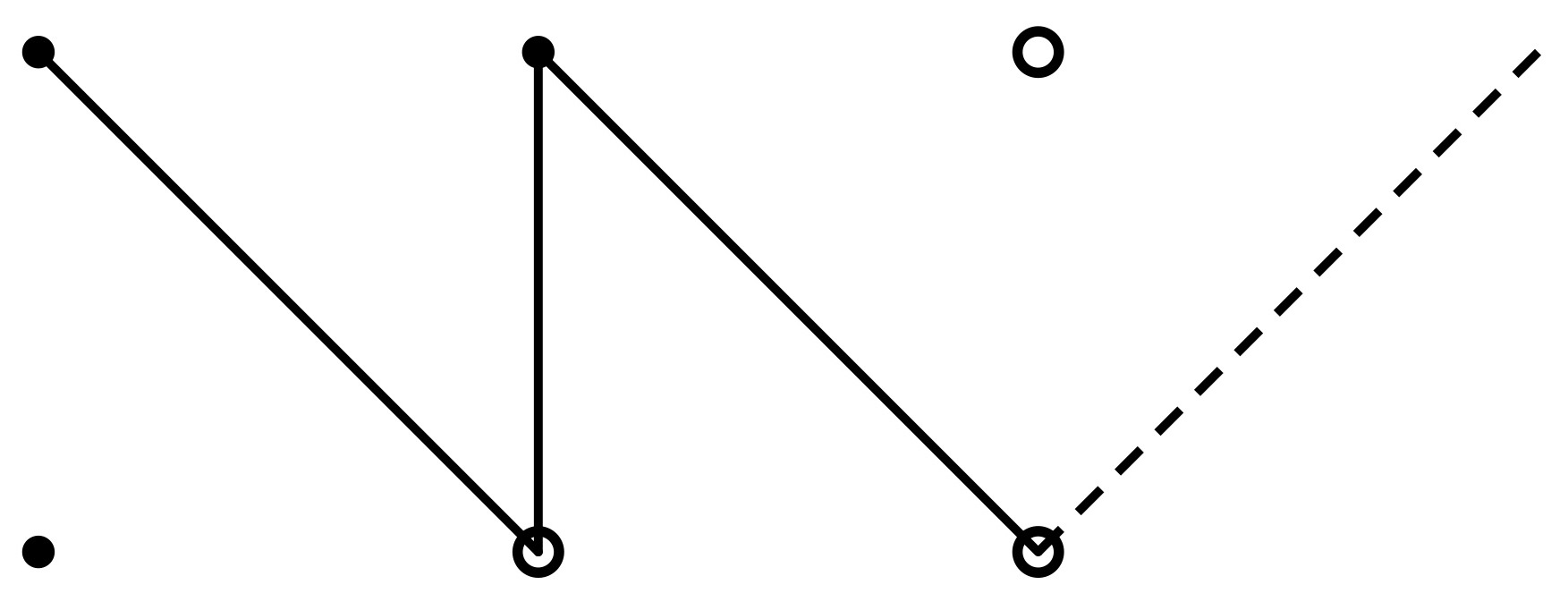}
\end{center} 
\caption{Vertices in $n$ to the left of the gap are shifted right by 1.}
\end{figure}

\item $\ell^i = n^i+1$

\begin{figure}[htbp]
\begin{center}
\includegraphics[width=2.2 in]{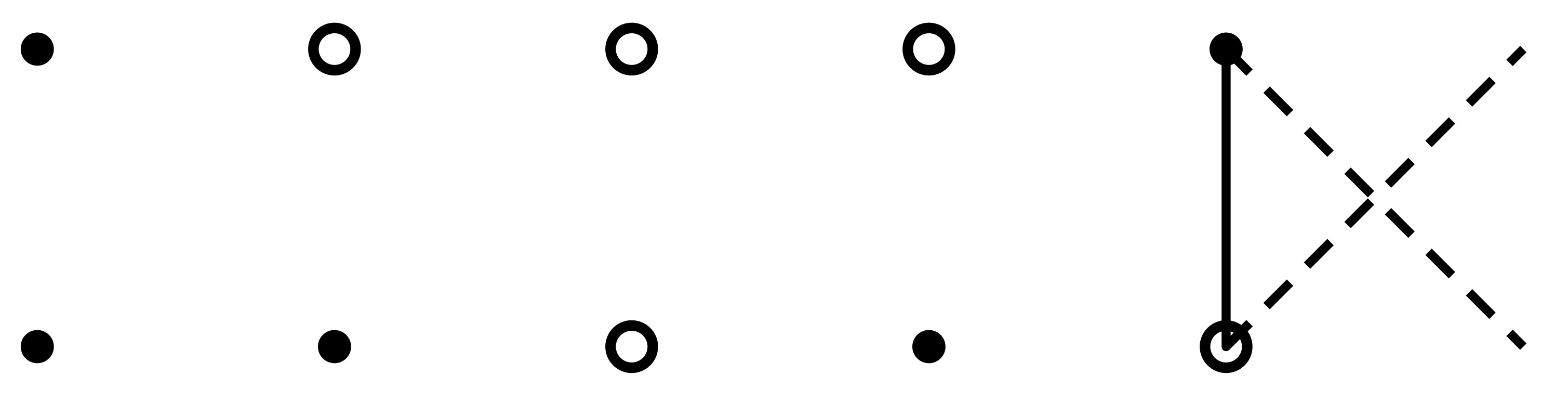}
\hspace{.7 cm}
\includegraphics[width=.5 in]{rsarrow.jpg}
\hspace{.7 cm}
\includegraphics[width=2.2 in]{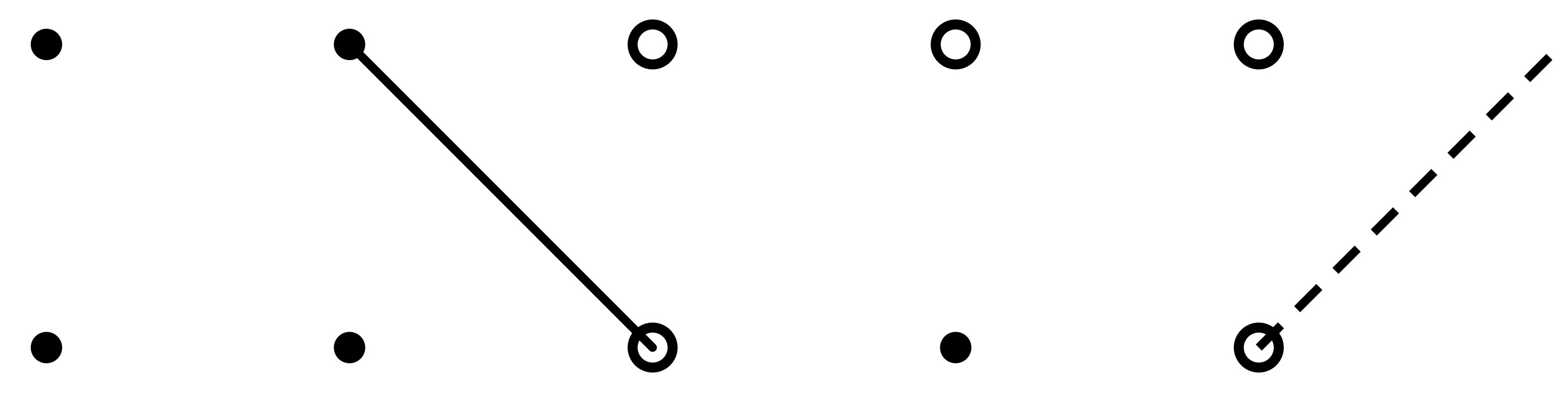}
\end{center} 
\caption{Vertices in $n$ to the left of the first gap (or first vertex in $n$ not in $S$) are shifted right by 1.}
\end{figure}

\pagebreak

\item $\ell^i = n^i-1$

\begin{figure}[htbp]
\begin{center}
\includegraphics[width=2.1 in]{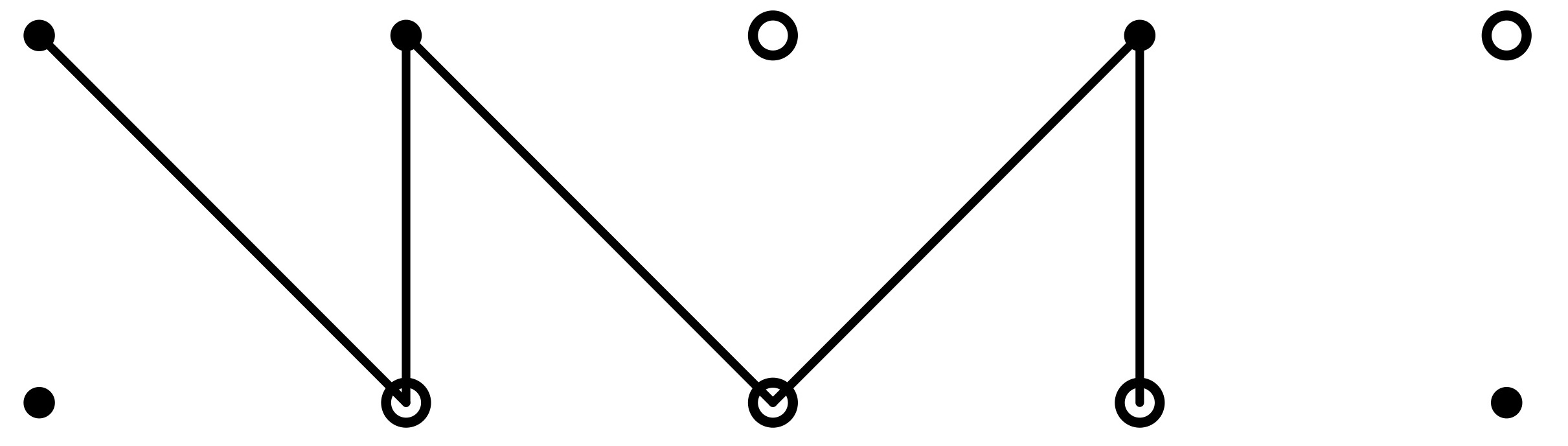}
\hspace{.7 cm}
\includegraphics[width=.5 in]{rsarrow.jpg}
\hspace{.7 cm}
\includegraphics[width=2.3 in]{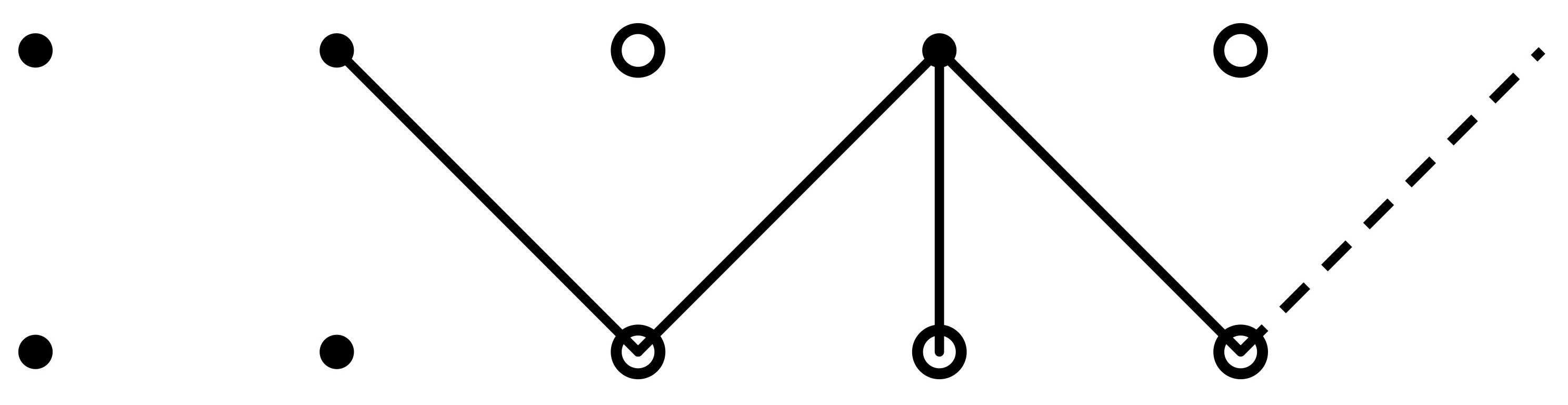}
\end{center} 
\caption{Vertices in $\ell$ to the left of the first gap (or first vertex in $\ell$ not in $S$) are shifted right by 1.}
\end{figure}

\item $\ell^i = n^i+c$ where $c \geq 2$

\begin{figure}[htbp]
\begin{center}
\includegraphics[width=2.1 in]{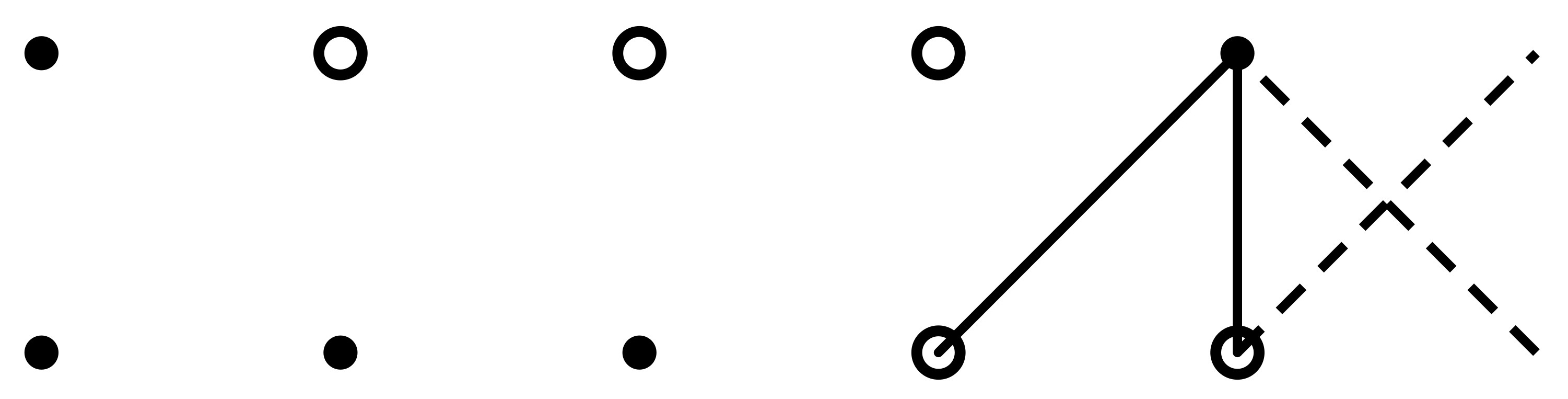}
\hspace{.7 cm}
\includegraphics[width=.5 in]{rsarrow.jpg}
\hspace{.7 cm}
\includegraphics[width=2.2 in]{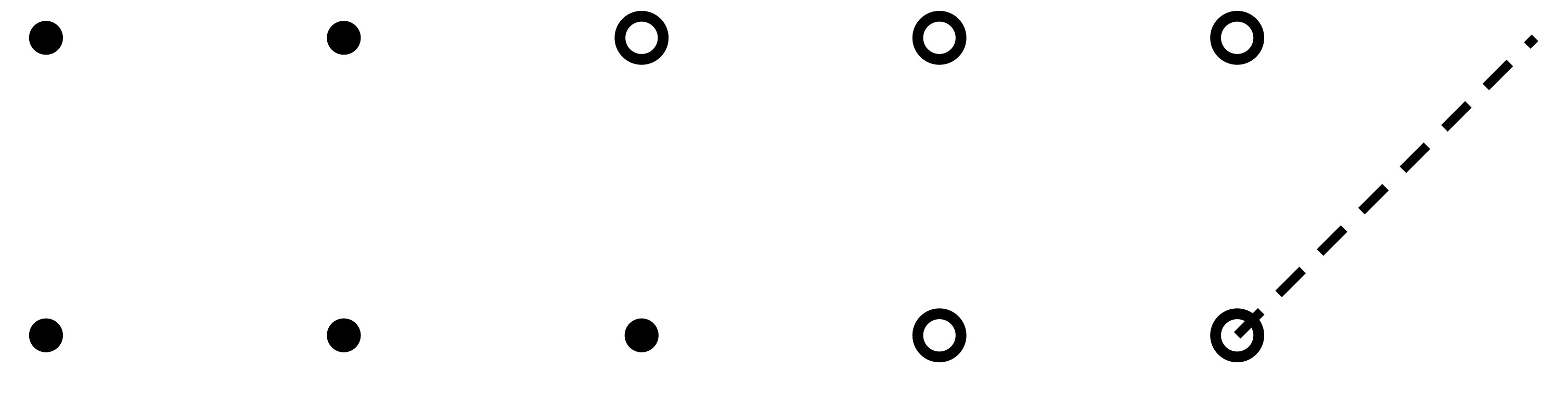}
\end{center} 
\caption{Vertices in $n$ to the left of the first gap (or first vertex in $n$ not in $S$) are shifted right by 1.}
\end{figure}

\item $\ell^i = n^i-c$ where $c \geq 2$.

\begin{figure}[htbp]
\begin{center}
\includegraphics[width=2.1 in]{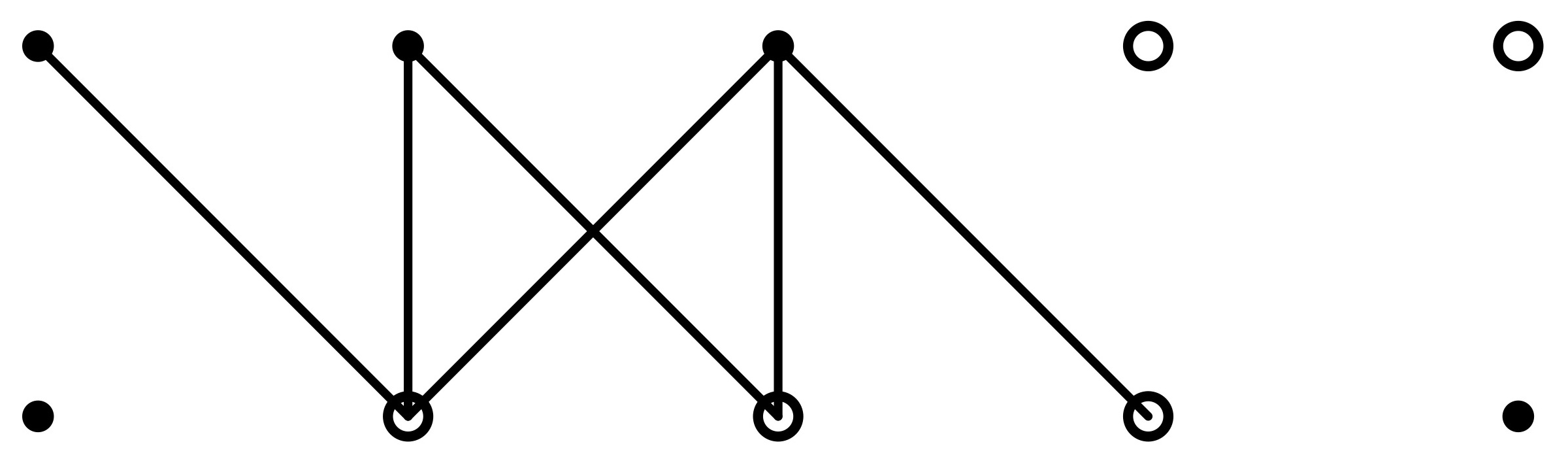}
\hspace{.7 cm}
\includegraphics[width=.5 in]{rsarrow.jpg}
\hspace{.7 cm}
\includegraphics[width=2.2 in]{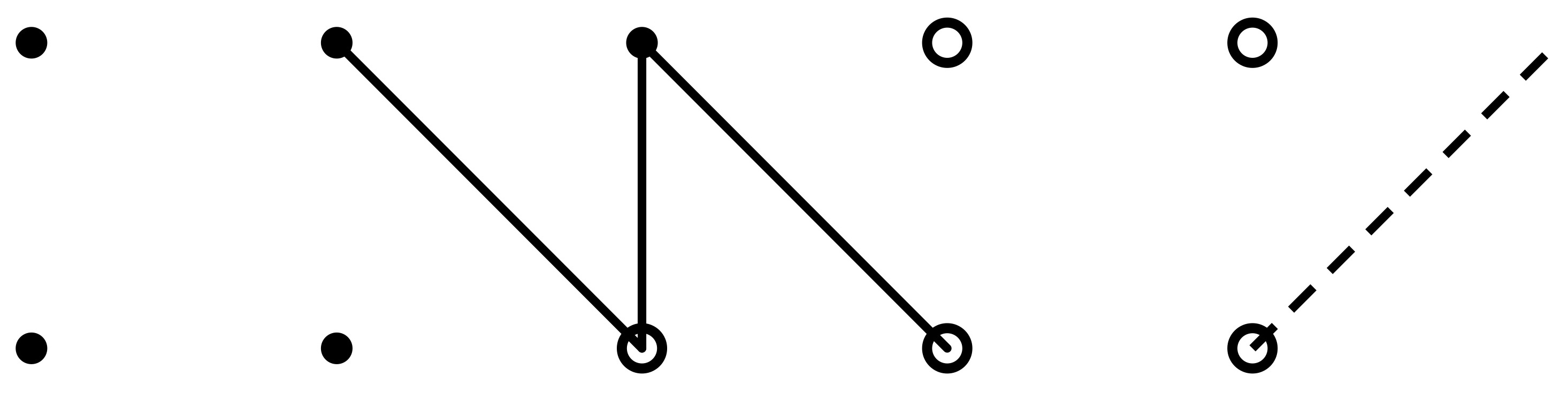}
\end{center} 
\caption{Vertices in $\ell$ to the left of the first gap (or first vertex in $\ell$ not in $S$) are shifted right by 1.}
\end{figure}

\end{enumerate}

Through a sequence of these steps, the vertices in $n$ and $\ell$ can be shifted, without increasing the boundary, to vertices in lines $n_0$ and $\ell_0$ respectively so that at least one of $n_0$ or $\ell_0$ is a segment:

\begin{align*}
\ell_0 &= \{(p, x \rightarrow i): a \leq x \leq b\} \\
& \quad \quad \quad \quad \text{OR} \\
n_0  &= \{(p+\epsilon, x \rightarrow i): c \leq x \leq d\} \\
\end{align*}

Finally, it is not hard to see that if one of $\ell_0$ or $n_0$ is a segment, the edge boundary can only stay the same or go down if the vertices in each of those lines are now centralized.  Thus, we have shown that
\begin{equation*}
 \left|  \partial_e(S_i,p,\epsilon)  \right| \leq  \left|  \partial_e(S,p,\epsilon)  \right|
\end{equation*}
which, by the arguments above, implies that 
\begin{equation*}
\left|\partial_e S_i\right| \leq \left| \partial_e S \right|
\end{equation*}

\end{proof}

%
%

\bibliographystyle{plain}

\end{document}